\documentclass[12pt]{amsart}
\usepackage{amsfonts}
\usepackage{amssymb}
\usepackage[letterpaper, left=2.5cm, right=2.5cm, top=2.5cm,
bottom=2.5cm,dvips]{geometry}
\usepackage{verbatim}
\usepackage[T1]{fontenc}
\usepackage{graphicx}
\usepackage{amsmath}

\usepackage{tikz}

\newtheorem{theorem}{Theorem}
\theoremstyle{plain}

\newtheorem{algorithm}{Algorithm}

\newtheorem{claim}{Claim}

\newtheorem{conjecture}{Conjecture}

\numberwithin{equation}{section}

\usepackage{array,multirow}
\usepackage{algorithm}
\usepackage{algpseudocode}

\usepackage{xcolor}

\usepackage[backref]{hyperref}
\hypersetup{
	colorlinks,
	linkcolor={blue!60!black},
	citecolor={red!60!black},
	urlcolor={red!60!black}
}

\newcommand{\oeis}[1]{\href{https://oeis.org/#1}{\ensuremath{\mathtt{#1}}}}

\begin{document}
	\title{Finite and infinite barrycades}
	
	\author[M. D\k{e}bsk]{Micha\l{} D\k{e}bski}
	\address{Faculty of Mathematics and Information Science, Warsaw University
		of Technology, 00-662 Warsaw, Poland}
	\email{m.debski@mini.pw.edu.pl}
	
	\author[J. Grytczuk]{Jaros\l aw Grytczuk}
	\address{Faculty of Mathematics and Information Science, Warsaw University
		of Technology, 00-662 Warsaw, Poland}
	\email{j.grytczuk@mini.pw.edu.pl}

	\author[P. Naroski]{Pawe\l{} Naroski}
	\address{Faculty of Mathematics and Information Science, Warsaw University
		of Technology, 00-662 Warsaw, Poland}
	\email{p.naroski@mini.pw.edu.pl}
	
	\author[B.Pawlik]{Bart\l omiej Pawlik}
	\address{Institute of Mathematics, Silesian University of Technology, 44-100 Gliwice, Poland}
	\email{bpawlik@polsl.pl}
	
	\author[J. Przyby\l o]{Jakub Przyby\l o}
	\address{AGH University of Krakow, al. A. Mickiewicza 30, 30-059 Krakow, Poland}
	\email{jakubprz@agh.edu.pl}
	
	\author[M. Śleszy\'nska-Nowak]{Ma\l gorzata Śleszy\'nska-Nowak}
	\address{Faculty of Mathematics and Information Science, Warsaw University
		of Technology, 00-662 Warsaw, Poland}
	\email{malgorzata.nowak@pw.edu.pl}

\begin{abstract}

An $n$-barrycade of height $h$ is a set of $h$ permutations of $[n]$ such that all the prefix sums are different. This notion was described in 2020 by Richard Guy, together with the central problem to determine for which values of $n$ there exists a break-free $n$-barrycade, that is, one in which every possible prefix sum from $1$ to $\frac{n(n+1)}{2}-1$ is covered. The height of such barrycade would be $\frac{n+2}{2}$.

We prove that barrycades of linear height exist for infinitely many sizes: there is a constant $c>0$ such that for infinitely many positive integers $n$ there exists an $n$-barrycade of height at least $cn$. We also explore an infinite variant of the problem and propose three constructions -- greedy, grasshopper and precise grasshopper -- that give increasingly more satisfying results. Along the way we encounter new integer sequences and formulate conjectures concerning omitted elements, missing partial sums, and word representations of infinite barrycades, which are firmly supported by computational data.
\end{abstract}

\maketitle

\section{Introduction}

Let $[n]=\{1,2,\ldots, n\}$. For any finite sequence $a=(a_1,a_2,\ldots,a_n)$ of integers we define a~\emph{proper partial sum} $$s_a(i)=a_1+a_2+\ldots+a_i$$ for every $i\in[n-1]$. The set of all proper partial sums of $a$ is denoted by $S_a$. An \emph{$n$-barrycade} is a collection $(\pi_1,\pi_2,\ldots,\pi_h)$ of permutations $\pi_i$ of the set $[n]$ such that $S_{\pi_i}\cap S_{\pi_j}=\emptyset$ for every $i,j\in[h]$. For example the collection $$\big((1,2,3,4),\,(2,3,4,1),\,(4,3,1,2)\big)$$ is a $4$-barrycade, while $$\big((1,2,3,4),\,(2,1,4,3)\big)$$ is not, since  $$S_{(1,2,3,4)}\cap S_{(2,1,4,3)}=\{3\}.$$
For an $n$-barrycade $B=(\pi_1,\pi_2,\ldots,\pi_h)$, we call $n$ its \emph{size}, $h$ its \emph{height} and $\pi_i$ its $i$-th \emph{row}.

A $n$-barrycade $(\pi_1,\pi_2,\ldots,\pi_h)$ is \emph{break-free} if every possible proper partial sum is covered by a permutation in the barrycade. If this is the case, then we necessarily have  $$\bigcup_{i=1}^hS_{\pi_i}=[N],$$ where $N=(1+2+\cdots+ n)-1=\frac{n(n+1)}{2}-1=\frac{(n-1)(n+2)}{2}.$ Moreover, since each set $S_{\pi_i}$ has exactly $n-1$ elements, a break-free $n$-barrycade of height $h$ must satisfy $h(n-1)=N$. Hence $$h=\frac{n+2}{2},$$which implies that $n$ must be even (or $n=1$).

Barrycades have a very natural graphical interpretation. Each number $L$ can be represented as a log, that is, a rectangle of length $L$ and unit height, and each permutation as a sequence of logs joined along their unit sides. A $n$-barrycade is then a rectangle of lenght $n(n+1)/2$ and height $h$ composed of such sequences of logs (rows) stacked on top of one another. Figure~\ref{Barry4} presents a~visualization of the break-free barrycade $$\displaystyle\big((1,6,4,2,5,3),\,(2,3,4,5,1,6),\,(3,1,6,4,5,2),\,(6,2,4,5,3,1)\big).$$

\begin{figure}\label{Barry4}
\centering
\begin{tikzpicture}[scale=0.75, barry/.style={line width=1.8pt}]
\draw[step=1, gray!60, dash pattern=on 2pt off 2pt] (-0.5,-0.5) grid (21.5,4.5);

\draw[barry] (0,0) rectangle (21,4);

\foreach \y in {1,2,3} {\draw[barry] (0,\y) -- (21,\y);}
\foreach \x in {1,7,11,13,18} {\draw[barry] (\x,3) -- (\x,4);}
\foreach \x in {2,5,9,15,16} {\draw[barry] (\x,2) -- (\x,3);}
\foreach \x in {3,4,10,14,19} {\draw[barry] (\x,1) -- (\x,2);}
\foreach \x in {6,8,12,17,20} {\draw[barry] (\x,0) -- (\x,1);}
\end{tikzpicture}
\caption{Barrycade $\displaystyle\big((1,6,4,2,5,3),\,(2,3,4,5,1,6),\,(3,1,6,4,5,2),\,(6,2,4,5,3,1)\big)$.}
\end{figure}

Barrycades\footnote{Guy used the spelling \emph{barrycade}, rather than \emph{barricade}, in honor of Barry Cipra, who prompted him to take up this topic. We also follow this convention.} were described by Richard Guy in 2020 \cite{Guy2020}, who was particularly interested in determining for which values of $n$ a break-free $n$-barrycade exists. He explicitly, though informally, conjectured that break-free $n$-barrycades exist for every even~$n$, writing that “it seems certain” that this is the case, while noting that no proof was known. We formulate his belief formally as follows.

\begin{conjecture}\label{Guyc}
For every even $n$ there exists a break-free $n$-barrycade.
\end{conjecture}

So far, mostly experimental results supporting the conjecture were derived. After the first version of this paper appeared, Bini\k{e}da, D\k{e}bski, Gutowski and Milewski~\cite{Bin2026} computationally confirmed the conjecture for all even $n\leq 98$. Moreover, the number of distinct break-free $n$-barrycades seems to grow very quickly. For instance, for $n=6$ there are at least $1120$ of them, while for $n=8$, there are at least $28\,432\,700$ of them (see \cite{Guy2020}.) Let us also mention that in 2011, Stan Wagon presented a version of the problem as Macalester College Problem of the Week 1148 \cite{Wag2011}. 

Our main purpose is to give a further support for Conjecture \ref{Guyc} by proving that $n$-barrycades of linear height exist (Theorem \ref{mt}). We also investigate a natural infinite variant of the problem in several versions. The paper is concluded with some comments on future research and a~reformulation of the problem in terms special words whose letters satisfy prescribed distance constraints. This interpretation may open a way for further connections, in particular, to some objects of interest in music, as suggested by Guy at the end of his article \cite{Guy2020}.

After the first version of this paper appeared, Bini\k{e}da et al.~\cite{Bin2026} obtained a substantial strengthening of our finite result. Building on some of the ideas developed here, they found a remarkably efficient and elegant construction showing that for every positive integer $h$ and every $n\geq 2h+3$ there exists an $n$-barrycade of height $h$.

In particular, for every $n\geq 5$ there exists an $n$-barrycade of height at least $\lfloor (n-3)/2\rfloor$. They also computationally verified the conjectured optimal construction for every height $2\leq h\leq 50$.

Let us also point on a wider context into which the topic of barrycades fits nicely. It includes a variety of problems involving permutations or words and having similar additive flavor. For, instance one may study permutations with maximum number of block-sums or words with forbidden additive block-patterns (see \cite{CassaigneCSS}, \cite{Konieczny}).

\section{Main result}

Conjecture \ref{Guyc} equivalently states that for every even $n$ there exists an $n$-barrycade of height $(n+2)/2$. A straightforward relaxation of this problem is to ask for a maximum possible height of an $n$-barrycade; we prove that it's at least linear in $n$. This result was subsequently substantially strengthened by Bini\k{e}da et al.~\cite{Bin2026}; see the discussion in the Introduction.

\begin{theorem}\label{mt}
There exists a constant $c>0$ such that for infinitely many positive integers $n$ there exists an $n$-barrycade of height at least $cn$.
\end{theorem}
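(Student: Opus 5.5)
The plan is to build the barrycade explicitly, using pairs of entries that sum to $n+1$, so that all proper partial sums of every row are controlled modulo $m:=n+1$. I will take $n$ even, so $n+1$ is odd, and I will restrict $n$ to a convenient infinite family; for instance $n+1=p$ prime, so that multiplication by small units permutes residues.

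\emph{Shape of a row.} The elements of $[n]$ split into the $n/2$ pairs $\{j,\,m-j\}$ with $1\le j\le n/2$. A row with \emph{start} $x$ is
$$\pi=(x,\;a_1,\;m-a_1,\;a_2,\;m-a_2,\;\dots,\;a_{t},\;m-a_t,\;m-x),\qquad t=\tfrac n2-1 .$$
Here $a_1,\dots,a_t$ contain exactly one element from each pair other than the pair of $x$, in some order. Its proper partial sums are
$$x+km \ (0\le k\le t) \qquad\text{and}\qquad x+a_{k+1}+km \ (0\le k<t).$$
For a partial sum $s$, write $k=\lfloor s/m\rfloor$ for its \emph{level} and call $s-km$ its \emph{offset}. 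The row then occupies, at each level $k$, only the offsets $x$ and $x+a_{k+1}$. The exception is when $x+a_{k+1}>m$: then the second sum carries into level $k+1$ with offset $x+a_{k+1}-m$.

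\emph{Disjointness conditions.} I will use rows $\pi_1,\dots,\pi_h$ with starts $x_i=i$ for $1\le i\le h=\lfloor cn\rfloor$. The rows are disjoint as soon as, at every level, the occupied offsets are pairwise distinct across rows. Each row occupies at most two offsets per level, so up to roughly $m/2$ rows could coexist. To make this work I will require three things.
\begin{itemize}
\item The values $x_i+a^{(i)}_{k+1}$ all lie in a window $(2h,\,m-2h)$. Then no carries occur, and these offsets never collide with any start $x_{i'}\le h$.
\item For each fixed level $k$, the map $i\mapsto x_i+a^{(i)}_{k+1}$ is injective.
\item Every row uses each pair other than its own exactly once.
\end{itemize}
The starts themselves are distinct, so the offsets $x_i$ never collide with each other.

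\emph{Schedule.} I will fix an indexing of the pairs by $j\in\mathbb{Z}_{n/2}$ and let row $i$ use, at level $k$, the pair with index $j=k+\lambda i$ for a suitable constant $\lambda$, read cyclically. Within that pair I choose the representative $a$ whose value $i+a$ lands in the middle window. Because the representatives are $j$ and $m-j$, one of $i+j$ and $i+m-j$ is usually in range. Injectivity in $i$ at fixed $k$ then reduces to injectivity of a map like $i\mapsto i+(k+\lambda i)$, or of its reflected version, which holds for $\lambda\neq -1$ and small $i$. For $p$ prime I will tune $\lambda$ so that the two orientations cannot collide with each other.

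\emph{Main obstacle.} The hard step is the bookkeeping at the boundary. Two things go wrong there.
\begin{itemize}
\item Each row must skip its own pair $\{x_i,m-x_i\}$, which disturbs the cyclic schedule. I would handle this by giving each row a single "idle" level, arranged so that the pairs being skipped all lie among indices $\le h$.
\item For pair indices $j$ near the ends of the range $[1,n/2]$, neither representative may fall in the safe window; this happens for $j<2h$ or $j$ close to $n/2$.
\end{itemize}
I expect to resolve both by using only $h=cn$ rows with $c$ small (say $c=1/20$). The finitely many bad levels per row (about $O(h)$ of them) can then be placed at the very beginning or end of the row, where the offsets can be checked directly or assigned by a greedy/Hall-type argument among the $\Theta(n)$ free offsets. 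That no collisions survive after this patching is the step I would verify most carefully.
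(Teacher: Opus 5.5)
There is a genuine gap, and it is structural, not a matter of boundary bookkeeping. In your template, the row with start $x$ has the proper partial sums $x+km$ for \emph{every} $0\le k\le t$. So the residue $x \bmod m$ is blocked at every level, the first and the last included.

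Suppose that row places the pair $\{d,m-d\}$ at level $k\le t-1$. The resulting partial sum is either $(x+d)+km$ or $(x-d)+(k+1)m$. In other words, it has residue $x+d$ or $x-d$ modulo $m$, and it lies at level $k$ or $k+1$, both of which are at most $t$. Hence the row can accommodate this pair only if $x+d$ and $x-d$ are not \emph{both} starts modulo $m$, and this does not depend on which level you use.

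With your starts $x_i=i$ this already fails for $h\ge 3$. Row $2$ must contain $1$ and $n$, and it produces either $3+km$, which lies in row $3$, or $1+(k+1)m$, which lies in row $1$. So moving the bad pairs to the beginning or end of the row, or running a greedy or Hall-type assignment, cannot help: no free offset exists for them. Note also that there are $\Theta(h)=\Theta(n)$ bad pairs per row, not finitely many.

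More generally, $0$ is never a start, so your template forces the set of starts to contain no nontrivial 3-term arithmetic progression in $\mathbb{Z}_m$. Lifting this set to $[n]$ and applying Roth's theorem shows that the height is $o(n)$. No choice of starts, of $\lambda$, or of the prime $m=p$ rescues the scheme; the best it could give is Behrend-type height $n^{1-o(1)}$.

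To reach linear height you have to give up the feature that every row occupies a fixed residue class at every level. The paper does this with a staircase construction:
\begin{itemize}
\item row $i$ begins with a block of length $i$;
\item it then runs through a fixed list of equal-length block sequences $P_1,\dots,P_h$ in cyclically shifted order, with one extra block of length $h$ inserted;
\item the $P_i$ are chosen greedily, with each earlier term causing at most $9$ skips, so that copies staggered by one unit never share a joint;
\item the small lengths $1,\dots,h-1$ are handled by the gadget $j,\,2h-j$, which is finally merged into a single block of length $2h$.
\end{itemize}
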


\begin{proof}
It is enough to show that there exists an $\mathcal{O}(n)$-barrycade of height $(h-1)$ for every positive integer $h$.

We start by building an auxiliary configuration with $h$ rows, such that the $i$-th row starts with a block of length $i$, that is followed by a carefully chosen sequence $P_i$. We pick a~sufficiently large constant $C$ and then select $P_i$'s according to the following claim.

\begin{claim}
\label{claim_finiteBarrClaim}
It is possible to choose sequences $P_1, P_2, \ldots, P_h$ of pairwise distinct block lengths smaller than $22h$, such that
\begin{enumerate}
\item[a)] every block length appears at most once among $P_1, \ldots, P_h$,
\item[b)] for every $j$ from $1$ to $h-1$, block of length $j$ appears in $P_j$ and is followed by block of length $2h-j$,
\item[c)] blocks of length $h$ and $2h$ don't appear in any $P_i$,
\item[d)] placing $P_1, \ldots, P_h$ in a barrycade, such that each $P_i$ starts one unit to the right from the start of $P_{i-1}$, creates no coincident joints,
\item[e)] all $P_i$'s have the same total length.
\end{enumerate}
\end{claim}

We will prove Claim \ref{claim_finiteBarrClaim} by building all the $P_i$'s simultaneously. First, we choose an increasing sequence of block lengths $x_1, x_2, \ldots, x_{h-1}$ with $x_1>2h$ and $x_{h-1} < 11h$, then for each $i$ from $1$ to $h-1$, we start the sequence $P_i$ by $x_i$, followed by a block of length $i$ and then a block of length $2h-i$. This ensures b) and doesn't violate a) nor c), but we need to choose $x_i$'s so that d) is not violated. 

We set $x_1=2h+1$ and then, having chosen $x_1, \ldots, x_{i-1}$, we pick $x_i$ to be the smallest number $d$ larger that $x_{i-1}$ such that the three values $P_{d,i}:=\lbrace x_i+i, x_i+2i, x_i+2i+(2h-i)\rbrace$ are disjoint from $V_k:=\lbrace x_k+k, x_k+2k, x_k+2k+(2h-k)\rbrace$ for every $k<i$; this clearly ensures d). When choosing $x_i$ we start with $d=x_{i-1}+1$ and, while $P_{d,i}$ intersects with some $V_k$, we say that $x_k$ causes a \emph{skip} and increase $d$ by $1$. A crucial observation is that throughout the whole procedure, for each $k$, $x_k$ can cause at most $9$ skips. Indeed, throughout the procedure, the smallest value from $P_{d,i}$ can only increase as $d$ or $i$ increases, so it can coincide with the smallest value from $V_k$ at most once; and the same is true for any combination of smallest/medium/largest value from $P_{d,i}$ and smallest/medium/largest value from $V_k$. It follows that the procedure made less than $9h$ skips, so $x_{h-1}<2h+9h+h=11h$.

Now we will append one more term to each $P_i$ to ensure e). So far, for every $i$ we have appended to $P_i$ blocks of total length $x_i+2h$, so we add a block of length $11h+x_{h-1}-x_i$ to make the total length of $P_i$ exactly $x_{h-1}+13h$. Note that since $x_i$'s formed a strictly increasing sequence and all the added block are longer than $x_{h-1}$, we have violated neither a), c) or d). The longest of the added blocks is $11h+x_{h-1}-x_1 < 22h$.

Finally, we choose $P_h$ to be a single block of length $x_{h-1}+13h$, which is at most $24h$. Since it doesn't violate any conditions from a) to e), the proof of the claim is complete.

Now we will use sequences $P_1, \ldots, P_h$ given by Claim \ref{claim_finiteBarrClaim} to build a $24h$-barrycade of height $h$. As announced earlier, for every $i$ from $1$ to $h$, we start the $i$-th row by a block of length $i$, followed by $P_i$. By d), this doesn't create any coincident joints. Next, we append a block of length $h$ to the first row, and then append one of $P_i$'s to each row in the same order, but shifted cyclically, i.e. the block $h$ in the first row is followed by $P_{h-1}$, $P_1$ is appended to the second row, $P_2$ is appended to the third row and so on. Note that by e) the end of an added block of length $h$ comes one position after the end of a copy of $P_h$ in the last row; therefore, once again using d), we observe that int the part of a barrycade that we have build, all the proper partial sums are different. We repeat this operation $h-2$ more times, i.e. for $j$ from $2$ to $h-1$ we append a block of length $h$ to the $j$-th row, then to each row $i$ append a copy of $P_{i-j}$, where the subtraction is taken modulo $h$; again, by conditions d) and e) from the claim, the part of the barrycade we constructed has all it's proper partial sums different.

Next, we delete the last row. Note that at this point, for every $i$, the $i$-th row contain a block of length $i$, a block of length $h$, and one copy of each of the sequences $P_1, P_2, \ldots, P_h$; in particular, ends of all the rows are in $h-1$ consecutive positions. By conditions a) and c) of the claim, in each row all the lengths of used blocks are distinct, except the length $i$, that appears twice in the $i$-th row. Also, the set of used block lengths is the same for each row, and it contains all lengths from $1$ to $h-1$ by b).

Now, for every block length $\ell\leqslant 24h$ that has not been used so far, except $2h$, we append $\ell$ to every remaining row; since $\ell>h$, this doesn't create any coincident joints. Then, for every $i$, we put a block of length $2h-i$ at the end of the $i$-th for.

We finalize the construction by replacing, for every $i$, the two consecutive block of lengths $i$ and $2h-i$ in the $i$-th row by a block of length $2h$; recall that those block are consecutive by b). At this point each block length from $1$ to $24h$ appears exactly once in each row and there are no coincident joints, so we have built a $24h$-barrycade of height $h-1$, thereby completing the proof.
\end{proof}

\section{Infinite barrycades}
	
The goal of this section is to prove an infinite version of Conjecture \ref{Guyc}. We will provide a number of constructions that are increasingly more successful and eventually lead to the desired outcome, but they also leave some loose ends that sparked our curiosity. We start by giving infinite analogs of earlier definitions.

The constructions developed here, together with their associated sequences of omitted elements, missing partial sums, first row elements, and word representations, led to thirteen new entries in the On-Line Encyclopedia of Integer Sequences (OEIS): \oeis{A399897}--\oeis{A399909} \cite{OEIS}.

Let \(\mathbb{N}\) denote the set of positive integers. An infinite sequence $\mu=(a_1,a_2,a_3,\ldots)$ of positive integers, with \(a_i\neq a_j\) for \(i\neq j\), is called a \emph{quasi-permutation}. The set
$$S_\mu=\{a_1,\,\ a_1+a_2,\,\ a_1+a_2+a_3,\,\ldots\}$$
is called the \emph{set of partial sums} of the quasi-permutation $\mu=(a_1,a_2,a_3,\ldots)$. A special case of a quasi-permutation is a bijection $\mathbb{N}\to\mathbb{N}$, which is called a \emph{permutation of $\mathbb{N}$}.

A \emph{$\mathbb{N}$-quasi-barrycade} is a collection of quasi-permutations $(\mu_1,\,\mu_2,\,\mu_3,\,\ldots)$ such that $$S_{\mu_i}\cap S_{\mu_j}=\emptyset\ \mbox{ for all }\ i\neq j.$$ The quasi-permutation $\mu_i$ is called an \emph{$i$-th row} of $\mathbb{N}$-quasi-barrycade $(\mu_1,\,\mu_2,\,\mu_3,\,\ldots)$. An \emph{$\mathbb{N}$-barrycade} is an $\mathbb{N}$-quasi-barrycade that consists only of permutations of $\mathbb{N}$.

In the remainder of this section, every visualization of a barrycade will be built from the bottom up. The dashed top and right boundaries indicate that the barrycade continues upward and to the right. The pseudocodes of the corresponding algorithms, additional terms of each sequence described below, as well as the prefixes of infinite word representations of described infinite barrycades, are listed in Appendix~\ref{apA}. 

\subsection{Greedy $\mathbb{N}$-quasi-barrycade $B_\varrho$}

We start by constructing a quasi-barrycade of infinite height with a simple greedy approach.

\begin{theorem}\label{infthm1}
There exists a break-free $\mathbb{N}$-quasi-barrycade of infinite height.
\end{theorem}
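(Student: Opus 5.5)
The idea is a dovetailed greedy construction. We never complete a row at once. Instead we keep, at every stage, finitely many rows, each given by a finite prefix, and we extend them step by step. Throughout, we maintain the invariant that each prefix has pairwise distinct entries and that the sets of partial sums of distinct prefixes are pairwise disjoint. Here \emph{break-free} for an $\mathbb{N}$-quasi-barrycade is understood to mean $\bigcup_i S_{\mu_i}=\mathbb{N}$.

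At stage $t=1,2,3,\ldots$ we do two things.

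\emph{Covering step.} Let $m_t$ be the smallest positive integer that is not yet a partial sum of any current prefix. We open a new row whose first element is $m_t$. Its only partial sum so far is $m_t$, which by choice is not used by any other row, so the invariant is preserved.

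\emph{Extension step.} Going through all previously opened rows in order, we extend each by one element. For a row with current total $s$, we append the smallest positive integer $a$ satisfying two conditions: $a$ does not already occur in that row, and $s+a$ is not a partial sum of any current prefix, including those just extended in this stage. Such an $a$ exists because only finitely many integers are forbidden by either condition, while infinitely many candidates remain. Appending $a$ keeps the entries of the row distinct and keeps all partial-sum sets disjoint.

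Taking limits, each row opened at stage $t$ is extended at every later stage. Hence it becomes an infinite sequence of distinct positive integers, that is, a quasi-permutation $\mu_t$. Any two partial sums lie in finite prefixes present simultaneously at some stage. Disjointness at every stage therefore gives $S_{\mu_i}\cap S_{\mu_j}=\emptyset$ for $i\neq j$. A new row is opened at every stage, so the height is infinite.

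For break-freeness, note that the set of covered integers only grows. At stage $t$ the smallest uncovered integer $m_t$ becomes covered, so $m_t$ is strictly increasing and every integer $\le m_t$ is covered after stage $t$. Any given $n$ is therefore covered after at most $n$ stages.

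The only delicate point is the bookkeeping in the extension step. We must check that forbidding sums used by other rows, including those created earlier in the same stage, never blocks a row. This is exactly the finiteness observation above. No other obstacle is expected, since the theorem asks only for quasi-permutations and not bijections of $\mathbb{N}$.
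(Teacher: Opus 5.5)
Your proof is correct, but it is organized differently from the paper's.

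The paper completes each infinite row before starting the next. Row $i$ begins with the least element of $\mathbb{N}\setminus P_{i-1}$, where $P_{i-1}$ is the set of all partial sums of the finished rows $1,\dots,i-1$. Each subsequent term is the least unused integer whose new partial sum avoids $P_{i-1}$. Since $P_{i-1}$ is infinite there, the existence of a next term is not automatic, and the paper needs a density estimate: the $m$-th partial sum of a quasi-permutation is at least $m(m+1)/2$. So finitely many rows have only $O(\sqrt{x})$ partial sums below $x$, and infinitely many values beyond the current sum remain free.

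Your dovetailing keeps only finitely many finite prefixes at each stage. Every forbidden set is therefore finite, and existence is trivial. The cost is the passage to the limit: disjointness must be witnessed at a finite stage, and every row must be extended infinitely often. You handle both correctly. Your break-freeness argument via the strictly increasing $m_t$ is exactly the monotonicity the paper invokes implicitly.

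What the sequential version buys is a canonical object. Each row depends only on the earlier, completed rows and is chosen greedily against them. This defines the specific quasi-barrycade $B_\varrho$, whose omitted numbers and first elements are the subject of the subsequent conjecture. In your construction, earlier rows must also dodge partial sums of rows opened later, so you obtain a valid but different quasi-barrycade.
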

\begin{proof}

We construct rows in order, starting from the first. Having chosen rows form $1$ to $i-1$ and a prefix of the $i$-th row, we choose the next term to be the smallest positive integer not yet used in the prefix whose addition does not create a~partial sum coinciding with any partial sum already occurring in any of the previous rows -- i.e. the first term in each row is the smallest positive integer that is not a partial sum of any of the previous rows, which ensures that the barrycade we build is break-free. Note that a choice of the next term is always possible, because the $m$-th partial sum in each row is at least $m(m+1)/2$, so each row has at most $O\left(\sqrt{x}\right)$ partial sums smaller than $x$; therefore, the first $i-1$ rows leave an infinite number of natural numbers that are not partial sums. It completes the proof.
\end{proof}

Theorem \ref{infthm1} does not properly solve the infinite counterpart of Conjecture \ref{Guyc}, because the rows of the constructed quasi-barrycade are not permutations of $\mathbb{N}$. However, it seems to be surprisingly close.

\begin{figure}[t]
\centering
\begin{tikzpicture}[scale=0.7,barry/.style={line width=1.8pt, line cap=butt},cont/.style={line width=1.8pt, line cap=butt, dash pattern=on 4pt off 3pt}]
\def\W{22}
\def\H{5}

\draw[step=1, gray!60, dash pattern=on 2pt off 2pt] (-0.5,-0.5) grid (\W+0.5,\H+0.5);

\draw[barry] (0,0) -- (\W+0.5,0);
\draw[barry] (0,0) -- (0,\H+0.5);

\draw[cont] (0,\H+0.5) -- (\W+0.5,\H+0.5);
\draw[cont] (\W+0.5,0) -- (\W+0.5,\H+0.5);

\foreach \y in {1,2,3,4,5} {\draw[barry] (0,\y) -- (\W+0.5,\y);}

\foreach \x in {1,3,6,10,15,21} {\draw[barry] (\x,0) -- (\x,1);}
\foreach \x in {2,5,9,14,20} {\draw[barry] (\x,1) -- (\x,2);}
\foreach \x in {4,7,8,13,19} {\draw[barry] (\x,2) -- (\x,3);}
\foreach \x in {11,12,16,18} {\draw[barry] (\x,3) -- (\x,4);}
\foreach \x in {17,22} {\draw[barry] (\x,4) -- (\x,5);}
\end{tikzpicture}
\caption{A fragment of the greedy $\mathbb{N}$-quasi-barrycade $B_\varrho$.}
\label{Gvarrho}
\end{figure}

We call the barrycade constructed in the above proof the greedy $\mathbb{N}$-quasi-barrycade $B_\varrho=\{\varrho_1,\varrho_2,\varrho_3,\ldots\}$. It is easy to see that the first four rows of $B_\varrho$ are 
\begin{align*}
\varrho_1&=(1,\,2,\,3,\,4,\,5,\,6,\,7,\,8,\,9,\,10,\,11,\,12,\,13,\,14,\,\ldots),\\
\varrho_2&=(2,\,3,\,4,\,5,\,6,\,7,\,8,\,9,\,10,\,11,\,12,\,13,\,14,\,15,\,\ldots),\\
\varrho_3&=(4,\,3,\,1,\,5,\,6,\,7,\,8,\,9,\,10,\,11,\,12,\,13,\,14,\,15,\,\ldots),\\
\varrho_4&=(11,\,1,\,4,\,2,\,5,\,6,\,3,\,7,\,8,\,9,\,12,\,13,\,14,\,15,\,\ldots).
\end{align*}

The first row $\varrho_1$ is a permutation of $\mathbb{N}$. In contrast, $\varrho_2$, $\varrho_3$, and $\varrho_4$ are not permutations, but each of them omits exactly one number -- $1$, $2$, and $10$, respectively. Computational experiments suggest that this phenomenon persists; we didn't manage to prove it, but conjecture that it is indeed the case.

Let us consider a sequence~\oeis{A399907} such that for $i\geqslant{2}$ we have $\oeis{A399907}(i)$ is the smallest number that is omitted in the quasi-permutation $\varrho_i$. Therefore, the first 15 terms of \oeis{A399907} are  
$$1,\, 2,\, 10,\, 16,\, 29,\, 39,\, 51,\, 74,\, 101,\, 129,\,131,\,165,\,202,\,235,\,281.$$ 

The omitted numbers are seemingly related to first terms of the rows. Let the sequence \oeis{A399908} consist of the first elements of the rows $\varrho_i$ of $B_\varrho$, i.e., $\oeis{A399908}(i)=\varrho_i(1)$. The first 15 elements of the sequence \oeis{A399908} are
$$1,\, 2,\, 4,\, 11,\, 17,\, 30,\, 40,\, 52,\, 75,\, 102,\,130,\,132,\,166,\,203,\,236.$$ 

The first $100$ terms of sequences \oeis{A399907} and \oeis{A399908} are listed in tables \ref{tabA1} and \ref{tabA2}, respectively. After examining the first $1000$ terms of both sequences, we observed an apparent relation $\oeis{A399908}(i)=\oeis{A399907}(i)+1$ for every $i\geqslant 4$. We state those observations in the following form.

\begin{conjecture}
The following two statements hold.
\begin{enumerate}
\item Every row of $B_\varrho$, except the first one, omits exactly one positive integer, that is, for each $r\geqslant 2$ there exists a unique positive integer $N_r$ such that
$$\mathbb{N}\setminus \{\varrho_r(k): k\geqslant 1\}=\{N_r\}.$$
\item $\oeis{A399908}(i)=\oeis{A399907}(i)+1$ for every $i\geqslant 3$.
\end{enumerate}
\end{conjecture}

The algorithm \ref{alvarrho} in the Appendix generates barrycade $B_\varrho$ and the visualization of the fragment of $B_\varrho$ is presented in figure \ref{Gvarrho}.

\subsection{Grasshopper $\mathbb{N}$-barrycade $B_\psi$}\label{ssBpsi}

Now we will fix the shortcoming of Theorem \ref{infthm1} by building a barrycade in which every row is a permutation of $\mathbb{N}$, proving the following.

\begin{theorem}\label{infthm2}
There exists an $\mathbb{N}$-barrycade of infinite height.
\end{theorem}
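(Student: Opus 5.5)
We will build infinitely many permutations of $\mathbb{N}$ with pairwise disjoint sets of partial sums by a dovetailing (``grasshopper'') procedure. The construction runs in stages. At each stage we extend finitely many rows by finitely many terms, and we never change a term once it has been placed. We fix an enumeration of all pairs $(i,m)\in\mathbb{N}\times\mathbb{N}$ in which every pair occurs. At the stage handling $(i,m)$ we do the following. If row $i$ has not been started, we open it. If $m$ does not yet occur in row $i$, we make sure it is inserted into that row. Every row is therefore eventually opened, and every positive integer eventually appears in every row, so each row is a permutation of $\mathbb{N}$. Distinctness of entries within a row will be maintained at every step.

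The key step is inserting a required value $m$ into row $i$, whose current finite prefix has sum $s$. We cannot simply append $m$, since $s+m$ may already be a partial sum of another row. Instead we first make a long ``jump'': we append a huge unused value $L$ and then append $m$. We choose $L$ to satisfy three conditions:
\begin{itemize}
\item $L$ is larger than every value used so far in any row;
\item $s+L$ is larger than every partial sum so far in any row;
\item $s+L+m \ne s+L$, which holds trivially.
\end{itemize}
Since the two new partial sums $s+L$ and $s+L+m$ both exceed every existing partial sum, they collide with nothing. In the edge case where $m$ is tiny, $s+L$ and $s+L+m$ are still distinct, because they are consecutive partial sums of a single row. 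Neither $L$ nor $m$ was previously used in row $i$, so the row still has distinct entries. Opening a new row works the same way: it starts with a single huge value, larger than every partial sum so far, followed by the requested $m$ if needed.

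One subtlety is that the huge values $L$ also land in the rows. This causes no problem: each $L$ is used only once in its row, and any value of $\mathbb{N}$ is still requested later by the enumeration. If that value already sits in the row as a jump, the request is simply skipped. Thus both the invariant (disjointness of all partial-sum sets and distinctness within each row) and the surjectivity of each row hold in the limit. Disjointness holds because any two partial sums are created at some finite stage, and at creation each new sum exceeds all earlier ones.

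The only point I expect to require care is the bookkeeping. We must ensure that every row is infinite and that the limit sequences are genuinely defined, i.e.\ each row is extended infinitely often. Both follow because each pair $(i,m)$ occurs in the enumeration and each row receives infinitely many distinct values, so every row has infinitely many terms. Beyond that, the argument is a routine induction on stages. Unlike the greedy construction of Theorem~\ref{infthm1}, this barrycade is not claimed to be break-free; that stronger property is presumably what the later ``precise'' variant addresses.
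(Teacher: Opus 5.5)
Your proof is correct, and it takes a genuinely different route from the paper's. The paper builds the rows one at a time, finishing each row as an infinite sequence before starting the next. In row $i$ it repeatedly takes the smallest unused value $K$ and places before it the smallest admissible $K'$, so that neither $s_P+K'$ nor $s_P+K'+K$ is a partial sum of rows $1,\dots,i-1$. Since those earlier rows are already infinite, the paper must avoid an infinite set of sums. The existence of $K'$ therefore needs a counting observation: the $k$th partial sum of a row is at least $k(k+1)/2$, so each row has only $O(\sqrt{x})$ partial sums below $x$.

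Your dovetailing over pairs $(i,m)$ keeps every row finite at every stage, so only finitely many partial sums ever have to be avoided. Jumping past all of them rules out collisions by monotonicity, since the global set of partial sums is created in increasing order. This makes your argument more elementary, with no density estimate at all. The price is a non-canonical and extremely sparse object. The paper's deterministic row-by-row greedy rule is what produces the explicit sequences studied afterwards. With the first term of each row chosen as the smallest uncovered number, that same rule directly gives the break-free Theorem~\ref{infthm3}, because the earlier rows are already complete when a row is started.

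One small repair is needed. When you insert $m$ you also need $L\neq m$: a requested $m$ may exceed every value used so far, and then a "huge" $L$ could equal it and duplicate an entry. Simply require $L>m$. Your third bullet, $s+L+m\neq s+L$, is not the condition that matters there.
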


\begin{proof}

As in the proof of Theorem \ref{infthm1}, we construct rows in order starting from the first. Having constructed rows form $1$ to $i-1$ and a prefix $P$ of the $i$-th row, we pick $K$ to be the smallest positive integer that has not appeared in the current row. Let $s_P$ be the sum of all terms of $P$. Then we choose the smallest positive integer $K'$ not yet used in the current row, with $K'\neq K$, such that appending $K'$ and then $K$ is admissible; that is, neither of the numbers $s_P+K'$ and $s_P+K'+K$ coincides with a partial sum of any previously constructed row. We then append the two terms $K',K$ to $P$ in this order.

It remains to justify that the auxiliary number $K'$ always exists. As before, each row has at most $O\left(\sqrt{x}\right)$ partial sums smaller than $x$. Since a choice of $K'$ is inadmissible only if $s_P+K'$ or $s_P+K'+K$ is a partial sum of a previously constructed row, it follows that there are at most $O\left(\sqrt{x}\right)$ inadmissible values of $K'$ smaller that $x$, and by considering large enough $x$ we see that it cannot exhaust all positive integers. Therefore, the proof is complete.
\end{proof}

We call the obtained collection of permutations of $\mathbb{N}$ a \emph{grasshopper $\mathbb{N}$-barrycade} $B_\psi=\{\psi_1,\psi_2,\psi_3,\ldots\}$. The first four rows of $B_\psi$ are
\begin{align*}
\psi_1&=(2,\,1,\,4,\,3,\,6,\,5,\,8,\,7,\,10,\,9,\,12,\,11,\,14,\,13,\,16,\,\ldots),\\
\psi_2&=(4,\,1,\,6,\,2,\,7,\,3,\,9,\,5,\,11,\,8,\,13,\,10,\,15,\,12,\,17,\,\ldots),\\
\psi_3&=(8,\,1,\,3,\,2,\,10,\,4,\,6,\,5,\,11,\,7,\,14,\,9,\,15,\,12,\,18,\,\ldots),\\
\psi_4&=(17,\,1,\,7,\,2,\,8,\,3,\,5,\,4,\,11,\,6,\,10,\,9,\,13,\,12,\,18,\,\ldots).
\end{align*}

The algorithm \ref{alpsi} generates barrycade $B_\psi$ and the visualization of the fragment of $B_\psi$ is presented in figure \ref{Gpsi}. 

Let us consider a sequence \oeis{A399897} such that for $i\geqslant1$ we have $\oeis{A399897}(i)$ is the first number in the permutation $\psi_i$. Therefore, the first $15$ terms of \oeis{A399897} are
$$2,\,4,\,8,\,17,\,30,\,44,\,75,\,86,\,116,\,133,\,145,\,204,\,208,\,271,\,294.$$
The first 100 terms of sequence \oeis{A399897} are listed in table \ref{tabA3}. 

Let us observe that the partial sum $1$ does not occur in any row of the barrycade $B_\psi$. Indeed, the construction of each row $\psi_i$ begins by searching for a number $K'$ that allows us to place the number $1$ immediately after $K'$. As a consequence, the sequence \oeis{A399897} is increasing, and hence any partial sum skipped by the first ``jump'' of the grasshopper in a given row is missed forever. The partial sums missed by the barrycade $B_\psi$ in Figure~\ref{Gpsi} are indicated by red dashed vertical lines.

The first $15$ terms of the sequence \oeis{A399898} of partial sums missed by the barrycade $B_\psi$ are
$$1,\,6,\,15,\,19,\,22,\,26,\,33,\,41,\,54,\,59,\,61,\,65,\,68,\,70,\,73$$
and the first $100$ terms are listed in Table~\ref{tabA4}. We conjecture that there are infinitely many more missing partial sums.

\begin{conjecture}
The sequence \oeis{A399898} is infinite.
\end{conjecture}

\begin{figure}[t]
\centering
\begin{tikzpicture}[scale=0.7,barry/.style={line width=1.8pt, line cap=butt},cont/.style={line width=1.8pt, line cap=butt, dash pattern=on 4pt off 3pt}]
\def\W{22}
\def\H{4}

\draw[step=1, gray!60, dash pattern=on 2pt off 2pt] (-0.5,-0.5) grid (\W+0.5,\H+0.5);

\draw[barry] (0,0) -- (\W+0.5,0);
\draw[barry] (0,0) -- (0,\H+0.5);

\draw[cont] (0,\H+0.5) -- (\W+0.5,\H+0.5);
\draw[cont] (\W+0.5,0) -- (\W+0.5,\H+0.5);

\foreach \y in {1,2,3,4} {\draw[barry] (0,\y) -- (\W+0.5,\y);}

\foreach \x in {2,3,7,10,16,21} {\draw[barry] (\x,0) -- (\x,1);}
\foreach \x in {4,5,11,13,20} {\draw[barry] (\x,1) -- (\x,2);}
\foreach \x in {8,9,12,14} {\draw[barry] (\x,2) -- (\x,3);}
\foreach \x in {17,18} {\draw[barry] (\x,3) -- (\x,4);}
\foreach \x in {1,6,15,19,22} {\draw[barry, red, dash pattern=on 3pt off 2pt] (\x,-.5) -- (\x,\H+.5);}
\end{tikzpicture}
\caption{A fragment of the grasshopper $\mathbb{N}$-barrycade $B_\psi$.}
\label{Gpsi}
\end{figure}

\subsection{Precise grasshopper $\mathbb{N}$-barrycade $B_\varphi$}\label{ssBvarphi}

We are now ready to prove an infinite variant of Conjecture \ref{Guyc} by combining the two previous constructions.

\begin{theorem}\label{infthm3}
There exists a break-free $\mathbb{N}$-barrycade of infinite height.
\end{theorem}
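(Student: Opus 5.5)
We combine the two previous constructions: take the break-free mechanism of Theorem~\ref{infthm1} and the grasshopper mechanism of Theorem~\ref{infthm2}. We build rows $\varphi_1,\varphi_2,\ldots$ in order. For row $i$ we fix the first term first. Let $m_i$ be the smallest positive integer that is not a partial sum of any of the rows $\varphi_1,\ldots,\varphi_{i-1}$, and set $\varphi_i(1)=m_i$. This term is admissible, because it creates the single partial sum $m_i$, which is free by definition. Its purpose is to guarantee break-freeness: every positive integer $s$ is eventually the smallest uncovered number for some row, so $s$ is covered by that row at the latest. This needs $m_i\to\infty$, and in fact $m_i$ is strictly increasing, since after row $i$ the number $m_i$ is covered. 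So each integer $s$ is covered, either before it becomes the minimum or as the first term of the row at which it is the minimum.

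After the first term we continue the row exactly as in the proof of Theorem~\ref{infthm2}. Let $P$ be the current prefix with sum $s_P$, and let $K$ be the smallest positive integer not yet used in the row. If $K$ is not yet used we choose the smallest admissible $K'\neq K$, also not used, such that neither $s_P+K'$ nor $s_P+K'+K$ is a partial sum of an earlier row, and we append $K',K$.

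Existence of $K'$ follows by the same counting as before. Each earlier row has $O(\sqrt{x})$ partial sums below $x$, and there are finitely many earlier rows. So the forbidden values of $K'$ below $x$ number $O(\sqrt{x})$, and the finitely many used values are also excluded; hence some admissible $K'$ exists. Every positive integer is eventually appended, because the smallest unused value $K$ is placed at every step. Therefore each row is a permutation of $\mathbb{N}$. One small point needs care: the value $m_i$ might coincide with $K$ at some later step. This causes no trouble, since we always define $K$ as the smallest number unused so far, and $m_i$ is already used.

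Disjointness of partial sums holds by construction: every new partial sum is checked against all previous rows. The first term $m_i$ is not a partial sum of an earlier row by its definition. I expect the only delicate step to be the break-free argument. One must make sure that a number $s$ which is the minimum uncovered value when row $i$ starts is not skipped, and starting row $i$ with exactly $m_i$ handles this directly. This is exactly the defect of $B_\psi$: there the first jump skipped values forever, and choosing the first term precisely repairs it.
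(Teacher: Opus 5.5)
Your proposal is correct and matches the paper's construction: start row $i$ with the least uncovered integer $m_i$, then append grasshopper pairs $K',K$, and get existence of $K'$ from the $O(\sqrt{x})$ counting. Your explicit checks that the $m_i$ strictly increase (which gives break-freeness) and that the successive $K$'s strictly increase (which makes each row a permutation) spell out steps the paper leaves implicit.
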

\begin{proof}
Once again, we construct rows in order starting from the first. Having constructed rows form $1$ to $i-1$, we start the $i$-th row with the smallest positive integer $m$ that is not a partial sum of any previous row, therefore ensuring the break-free property. Then, having constructed a prefix $P$ of the $i$-th row, with the sum $s_P$, we choose the smallest positive integer $K'$ not yet used in the current row, with $K'\neq K$, such that neither of the numbers $s_P+K'$ and $s_P+K'+K$ coincides with a partial sum of any earlier row. We then append the two terms $K',K$ to $P$ in this order; this ensures that each row will be a permutation of $\mathbb{N}$.

The proof is complete by observing that at each step the choice of $m$ or $K'$ is possible, which follows by the same argument as in the proofs of Theorems \ref{infthm1} and \ref{infthm2}.
\end{proof}

We call the obtained collection of permutations of $\mathbb{N}$ a \emph{precise grasshopper $\mathbb{N}$-barrycade} $B_\varphi=\{\varphi_1,\varphi_2,\varphi_3,\ldots\}$. The first four rows of $B_\varphi$ are
\begin{align*}
\varphi_1&=(1,\,3,\,2,\,5,\,4,\,7,\,6,\,9,\,8,\,11,\,10,\,13,\,12,\,15,\,14,\,\ldots),\\
\varphi_2&=(2,\,5,\,1,\,6,\,3,\,8,\,4,\,10,\,7,\,12,\,9,\,14,\,11,\,16,\,13,\,\ldots),\\
\varphi_3&=(3,\,6,\,1,\,8,\,2,\,7,\,4,\,11,\,5,\,12,\,9,\,15,\,10,\,16,\,13,\,\ldots),\\
\varphi_4&=(5,\,7,\,1,\,6,\,2,\,9,\,3,\,11,\,4,\,13,\,8,\,15,\,10,\,17,\,12,\,\ldots).
\end{align*}

The algorithm \ref{alvarphi} generate barrycade $G_\varphi$ and the visualization of the fragment of $G_\varphi$ is presented in figure \ref{Gvarphi}. 

One may wonder, to what extent does Theorem \ref{infthm3} support the original version of Conjecture \ref{Guyc}. The construction of $B_\varphi$ ``reaches to infinity'' when choosing subsequent terms of the sequences, which doesn't work in the finite setting, but still it may be interesting to see, how far does it need to reach.

Let us consider a sequence \oeis{A399900} such that for $i\geqslant1$ we have $\oeis{A399900}(i)$ is the first number in the permutation $\varphi_i$. Therefore, the first $15$ terms of \oeis{A399900} are
$$1,\,2,\,3,\,5,\,16,\,26,\,40,\,43,\,54,\,63,\,73,\,78,\,80,\,95,\,100.$$
The first 100 terms of sequence \oeis{A399900} are listed in table \ref{tabA5}. 

\begin{figure}[t]
\centering
\begin{tikzpicture}[scale=0.7,barry/.style={line width=1.8pt, line cap=butt},cont/.style={line width=1.8pt, line cap=butt, dash pattern=on 4pt off 3pt}]
\def\W{22}
\def\H{5}

\draw[step=1, gray!60, dash pattern=on 2pt off 2pt] (-0.5,-0.5) grid (\W+0.5,\H+0.5);

\draw[barry] (0,0) -- (\W+0.5,0);
\draw[barry] (0,0) -- (0,\H+0.5);

\draw[cont] (0,\H+0.5) -- (\W+0.5,\H+0.5);
\draw[cont] (\W+0.5,0) -- (\W+0.5,\H+0.5);

\foreach \y in {1,2,3,4,5} {\draw[barry] (0,\y) -- (\W+0.5,\y);}

\foreach \x in {1,4,6,11,15,22} {\draw[barry] (\x,0) -- (\x,1);}
\foreach \x in {2,7,8,14,17} {\draw[barry] (\x,1) -- (\x,2);}
\foreach \x in {3,9,10,18,20} {\draw[barry] (\x,2) -- (\x,3);}
\foreach \x in {5,12,13,19,21} {\draw[barry] (\x,3) -- (\x,4);}
\foreach \x in {16} {\draw[barry] (\x,4) -- (\x,5);}
\end{tikzpicture}
\caption{A fragment of the precise grasshopper $\mathbb{N}$-barrycade $B_\varphi$.}
\label{Gvarphi}
\end{figure}

Both grasshopper $B_\psi$ from subsection \ref{ssBpsi} and precise grasshopper $B_\varphi$ from subsection~\ref{ssBvarphi} can be considered in a more ,,greedy'' variant. In this version, we first try to insert the number~$K$; only if this is not possible do we search for a number $K'$ that enables the insertion of $K$. We present these two variants in subsections \ref{ssGpsi} and \ref{ssGvarphi}, respectively.

\subsection{Greedy grasshopper $\mathbb{N}$-barrycade $G_\psi$}\label{ssGpsi}

We now refine the grasshopper construction of barrycade $B_\psi$ by adding a greedy step: before making a jump of the form $K',K$, we first try to append the currently smallest unused number $K$ directly.

In each row, starting with the first, we proceed greedily as follows. At any stage, let $K$ be the smallest positive integer that has not yet appeared in the current row. If appending $K$ does not create a partial sum coinciding with a partial sum of any of the previously constructed rows, then we append $K$. Otherwise, we choose the smallest positive integer $K'$ not yet used in the current row, with $K'\neq K$, such that appending $K'$ and then $K$ is admissible; that is, neither of the two new partial sums coincides with a partial sum of any previously constructed row. We then append the two terms $K',K$ in this order.

Thus the construction remains greedy in two senses: it always tries to insert the smallest unused number $K$, and only when this is impossible does it perform the shortest admissible grasshopper jump that allows $K$ to be inserted immediately afterwards. In this way, every row of $G_\psi$ is forced to be a permutation of $\mathbb{N}$.

We call the obtained collection of permutations of $\mathbb{N}$ a \emph{greedy grasshopper $\mathbb{N}$-barrycade} $G_\psi=\{\psi_1,\psi_2,\psi_3,\ldots\}$. The first four rows of $G_\psi$ are
\begin{align*}
\psi_1&=(1,\,2,\,3,\,4,\,5,\,6,\,7,\,8,\,9,\,10,\,11,\,12,\,13,\,14,\,15,\,\ldots),\\
\psi_2&=(4,\,1,\,2,\,6,\,3,\,8,\,5,\,10,\,7,\,12,\,9,\,14,\,11,\,16,\,13,\,\ldots),\\
\psi_3&=(8,\,1,\,2,\,3,\,4,\,5,\,9,\,6,\,11,\,7,\,13,\,10,\,15,\,12,\,17,\,\ldots),\\
\psi_4&=(19,\,1,\,2,\,3,\,5,\,4,\,6,\,7,\,10,\,8,\,9,\,11,\,12,\,13,\,14,\,\ldots).
\end{align*}

The algorithm \ref{alGpsi} generates barrycade $G_\psi$ and the visualization of the fragment of $G_\psi$ is presented in figure \ref{GGpsi}. Let us consider a sequence \oeis{A399902} such that for $i\geqslant1$ we have $\oeis{A399902}(i)$ is the first number in the permutation $\psi_i$. Therefore, the first $15$ terms of \oeis{A399902} are
$$1,\,4,\,8,\,19,\,26,\,42,\,51,\,72,\,102,\,134,\,143,\,147,\,157,\,168,\,200.$$
The first 100 terms of sequence \oeis{A399902} are listed in table \ref{tabA6}. 

\begin{figure}[t]
\centering
\begin{tikzpicture}[scale=0.7,barry/.style={line width=1.8pt, line cap=butt},cont/.style={line width=1.8pt, line cap=butt, dash pattern=on 4pt off 3pt}]
\def\W{22}
\def\H{4}

\draw[step=1, gray!60, dash pattern=on 2pt off 2pt] (-0.5,-0.5) grid (\W+0.5,\H+0.5);

\draw[barry] (0,0) -- (\W+0.5,0);
\draw[barry] (0,0) -- (0,\H+0.5);

\draw[cont] (0,\H+0.5) -- (\W+0.5,\H+0.5);
\draw[cont] (\W+0.5,0) -- (\W+0.5,\H+0.5);

\foreach \y in {1,2,3,4} {\draw[barry] (0,\y) -- (\W+0.5,\y);}

\foreach \x in {1,3,6,10,15,21} {\draw[barry] (\x,0) -- (\x,1);}
\foreach \x in {4,5,7,13,16} {\draw[barry] (\x,1) -- (\x,2);}
\foreach \x in {8,9,11,14,18} {\draw[barry] (\x,2) -- (\x,3);}
\foreach \x in {19,20,22} {\draw[barry] (\x,3) -- (\x,4);}
\foreach \x in {2,12,17} {\draw[barry, red, dash pattern=on 3pt off 2pt] (\x,-.5) -- (\x,\H+.5);}
\end{tikzpicture}
\caption{A fragment of the greedy grashopper $\mathbb{N}$-barrycade $G_\psi$.}
\label{GGpsi}
\end{figure}

As in the grasshopper barrycade $B_\psi$, some partial sums never occur in $G_\psi$. The first $15$ terms of the sequence \oeis{A399903} of partial sums missed by the barrycade $G_\psi$ are
$$2,\,12,\,17,\,35,\,37,\,62,\,77,\,84,\,90,\,93,\,100,\,113,\,118,\,129,\,151$$
and the first $100$ terms are listed in Table~\ref{tabA7}. We state the following

\begin{conjecture}
The sequence \oeis{A399903} is infinite.
\end{conjecture}

\subsection{Precise greedy grasshopper $\mathbb{N}$-barrycade $G_\varphi$}\label{ssGvarphi}

We now introduce a greedy modification of the precise grasshopper $\mathbb{N}$-barrycade $B_\varphi$. As before, each row starts by jumping to the first available free position. After this initial step, however, we first try to append the smallest unused number directly; only if this is impossible do we use a grasshopper jump.

In each row, starting with the first, we proceed as follows. First, we jump to the first available free position. More precisely, if $m$ is the smallest positive integer not belonging to $P_{r-1}$, then we choose the first term of the row to be $m$. After this initial jump, the row is continued greedily.

At any later stage, let $K$ be the smallest positive integer that has not yet appeared in the current row. If appending $K$ does not create a partial sum coinciding with a partial sum of any previously constructed row, then we append $K$. Otherwise, we choose the smallest positive integer $K'$ not yet used in the current row, with $K'\neq K$, such that appending $K'$ and then $K$ is admissible; that is, neither of the two new partial sums coincides with a partial sum of any previously constructed row. We then append the two terms $K',K$ in this order.

We call the obtained collection of permutations of $\mathbb{N}$ a \emph{precise greedy grasshopper $\mathbb{N}$-barrycade} $G_\varphi=\{\varphi_1,\varphi_2,\varphi_3,\ldots\}$. The first four rows of $G_\varphi$ are
\begin{align*}
\varphi_1&=(1,\,2,\,3,\,4,\,5,\,6,\,7,\,8,\,9,\,10,\,11,\,12,\,13,\,14,\,15\,\ldots),\\
\varphi_2&=(2,\,5,\,1,\,3,\,7,\,4,\,9,\,6,\,11,\,8,\,13,\,10,\,15,\,12,\,17,\,\ldots),\\
\varphi_3&=(4,\,1,\,7,\,2,\,3,\,8,\,5,\,10,\,6,\,12,\,9,\,14,\,11,\,16,\,13,\,\ldots),\\
\varphi_4&=(9,\,10,\,1,\,4,\,2,\,3,\,5,\,7,\,6,\,13,\,8,\,14,\,11,\,17,\,12,\,\ldots).
\end{align*}

The algorithm \ref{alGvarphi} generates barrycade $G_\varphi$ and the visualization of the fragment of $G_\varphi$ is presented in figure \ref{GGvarphi}. Let us consider a sequence \oeis{A399905} such that for $i\geqslant1$ we have $\oeis{A399905}(i)$ is the first number in the permutation $\varphi_i$. Therefore, the first $15$ terms of \oeis{A399905} are
$$1,\,2,\,4,\,9,\,13,\,16,\,23,\,27,\,39,\,51,\,53,\,57,\,61,\,63,\,65.$$
The first 100 terms of sequence \oeis{A399905} are listed in table \ref{tabA8}. 

\begin{figure}[t]
\centering
\begin{tikzpicture}[scale=0.7,barry/.style={line width=1.8pt, line cap=butt},cont/.style={line width=1.8pt, line cap=butt, dash pattern=on 4pt off 3pt}]
\def\W{22}
\def\H{5}

\draw[step=1, gray!60, dash pattern=on 2pt off 2pt] (-0.5,-0.5) grid (\W+0.5,\H+0.5);

\draw[barry] (0,0) -- (\W+0.5,0);
\draw[barry] (0,0) -- (0,\H+0.5);

\draw[cont] (0,\H+0.5) -- (\W+0.5,\H+0.5);
\draw[cont] (\W+0.5,0) -- (\W+0.5,\H+0.5);

\foreach \y in {1,2,3,4,5} {\draw[barry] (0,\y) -- (\W+0.5,\y);}

\foreach \x in {1,3,6,10,15,21} {\draw[barry] (\x,0) -- (\x,1);}
\foreach \x in {2, 7, 8, 11, 18, 22} {\draw[barry] (\x,1) -- (\x,2);}
\foreach \x in {4, 5, 12, 14, 17} {\draw[barry] (\x,2) -- (\x,3);}
\foreach \x in {9, 19, 20} {\draw[barry] (\x,3) -- (\x,4);}
\foreach \x in {13} {\draw[barry] (\x,4) -- (\x,5);}
\end{tikzpicture}
\caption{A fragment of the precise greedy grashopper $\mathbb{N}$-barrycade $G_\varphi$.}
\label{GGvarphi}
\end{figure}

\section{Final thoughts}

Let us note that, apart from the graphical representation, a barrycade can also be encoded as a word. Let us consider a barrycade $B=(\pi_1,\pi_2,\ldots,\pi_h)$ of size~$n$ and height $h$ and let $s=\frac{n(n+1)}2-1$. Barrycade $B$ determines a word
$$w_B=b_1b_2\ldots b_s$$
over the alphabet $\{\mathtt0,\mathtt1,\ldots,\mathtt{h}\}$, where, for each $k\in\left[s\right]$,
$$b_k=
\begin{cases}
\mathtt{j}, & \text{if } k\in S_{\pi_j},\\
\mathtt{0}, & \text{if } k\notin \bigcup_{i=1}^h S_{\pi_i}.
\end{cases}
$$
This is well-defined, since the sets $S_{\pi_1},S_{\pi_2},\ldots,S_{\pi_h}$ are pairwise disjoint. Let us notice that every break-free barrycade can be considered over a smaller alphabet, namely $\{\mathtt1,\mathtt2,\ldots,\mathtt{h}\}$.

For the five infinite constructions from Section~3, the corresponding word representations are recorded in the OEIS as \oeis{A399909} for $B_\varrho$, \oeis{A399899} for $B_\psi$, \oeis{A399901} for $B_\varphi$, \oeis{A399904} for $G_\psi$, and \oeis{A399906} for $G_\varphi$ \cite{OEIS}.

Let us notice that the barrycade presented in figure \ref{Barry4} can be decoded by a word
$$\mathtt{12332414231413224134}.$$

This encoding admits the following simple characterization. Let $w=b_1b_2\ldots b_{n(n+1)/2}$ be a~word over the alphabet $\{\mathtt0,\mathtt1,\ldots,\mathtt{h}\}$. For a letter $\mathtt{j}$, where $j\in[h]$, let $$1\leqslant p_1^{(j)}<p_2^{(j)}<\ldots<p_{n-1}^{(j)}\leqslant b(b+1)/2-1$$ be the positions at which $\mathtt{j}$ occurs in $w$. Then $w$ encodes an $n$-barrycade if and only if, for every $j\in[h]$, the numbers
$$p_1^{(j)},\quad p_2^{(j)}-p_1^{(j)},\quad p_3^{(j)}-p_2^{(j)},\quad \ldots,\quad p_{n-1}^{(j)}-p_{n-2}^{(j)},\quad \frac{n(n+1)}2-p_{n-1}^{(j)}$$
are precisely the elements of $[n]$, each appearing exactly once.

Indeed, these numbers are the lengths of the consecutive logs in the $j$-th row of the barrycade. Thus the distances between consecutive occurrences of the same non-zero letter, together with the distances from the first and last occurrence to the two ends of the word, form a permutation of $[n]$.

In particular, a break-free $n$-barrycade is exactly such a word with no occurrences of $\mathtt0$. Equivalently, it is a word $$w=b_1b_2\ldots b_{T-1}$$ over the alphabet $\{\mathtt1,\mathtt2,\ldots,\mathtt{h}\}$ such that, for each $j\in[h]$, the letter $\mathtt{j}$ occurs exactly $(n-1)$ times and the $n$ distances determined by its occurrences and by the two ends of the word are precisely $1,2,\ldots,n$.

Finally, let us make an obvious remark that the barrycade problem can be studied for arbitrary given sequence of integers. To formulate it precisely, let $A=(a_1,a_2,\ldots)$ be any increasing sequence of positive integers. An $n$-barrycade of height $h$ is just a collection of $h$ permutations of the prefix $(a_1,a_2,\ldots,a_n)$ of $A$ whose all proper partial sums are pairwise different. Let $h_A(n)$ be the maximum height of an $n$-barrycade over $A$. It would be nice to know how this function behaves in case of some famous sequences, like primes or Fibonacci numbers.

\appendix

\section{Algorithms and tables}\label{apA}

\begin{algorithm}[H]
\caption{Construction of a greedy $\mathbb{N}$-quasi-barrycade $B_\varrho$}\label{alvarrho}
\begin{algorithmic}[1]
\State $P_0\gets \emptyset$
\For{$r=1,2,3,\ldots$}
    \State $\varrho_r\gets ()$
    \State $s\gets 0$
    \State $U\gets \emptyset$
    \For{$k=1,2,3,\ldots$}
        \State Choose the smallest positive integer $a$ such that
        \[
            a\notin U
            \quad\text{and}\quad
            s+a\notin P_{r-1}.
        \]
        \State Append $a$ to $\varrho_r$
        \State $U\gets U\cup\{a\}$
        \State $s\gets s+a$
    \EndFor
    \State $P_r\gets P_{r-1}\cup S_{\varrho_r}$
\EndFor
\end{algorithmic}
\end{algorithm}

\begin{table}
\centering
\begin{tabular}{|c|c|c|c|c|c|c|c|c|c|c|}
\hline
$\varrho_2-\varrho_{11}$ & $1$ & $2$ & $10$ & $16$ & $29$ & $39$ & $51$ & $74$ & $101$ & $129$ \\\hline
$\varrho_{12}-\varrho_{21}$ & $131$ & $165$ & $202$ & $235$ & $281$ & $320$ & $369$ & $393$ & $443$ & $505$ \\\hline
$\varrho_{22}-\varrho_{31}$ & $549$ & $583$ & $644$ & $685$ & $776$ & $840$ & $925$ & $967$ & $1044$ & $1107$ \\\hline
$\varrho_{32}-\varrho_{41}$ & $1198$ & $1247$ & $1345$ & $1389$ & $1522$ & $1568$ & $1693$ & $1799$ & $1857$ & $1997$ \\\hline
$\varrho_{42}-\varrho_{51}$ & $2097$ & $2196$ & $2272$ & $2394$ & $2506$ & $2510$ & $2788$ & $2845$ & $2961$ & $3046$ \\\hline
$\varrho_{52}-\varrho_{61}$ & $3118$ & $3309$ & $3316$ & $3564$ & $3734$ & $3937$ & $4041$ & $4200$ & $4312$ & $4441$ \\\hline
$\varrho_{62}-\varrho_{71}$ & $4545$ & $4778$ & $4905$ & $5013$ & $5107$ & $5381$ & $5507$ & $5736$ & $5868$ & $5936$ \\\hline
$\varrho_{72}-\varrho_{81}$ & $6271$ & $6383$ & $6591$ & $6826$ & $6939$ & $7120$ & $7198$ & $7328$ & $7491$ & $7772$ \\\hline
$\varrho_{82}-\varrho_{91}$ & $8039$ & $8340$ & $8344$ & $8698$ & $8863$ & $9168$ & $9373$ & $9483$ & $9725$ & $9922$ \\\hline
$\varrho_{92}-\varrho_{101}$ & $10\,137$ & $10\,493$ & $10\,627$ & $10\,818$ & $11\,095$ & $11\,373$ & $11\,541$ & $11\,750$ & $12\,019$ & $12\,340$ \\
\hline
\end{tabular}
\vskip0.5cm
\caption{The first $100$ elements of the sequence \oeis{A399907} of the smallest numbers omitted in the rows $\varrho_r$ for $r\geqslant 2$.}
\label{tabA1}
\end{table}

\bigskip

\begin{table}
\centering
\begin{tabular}{|c|c|c|c|c|c|c|c|c|c|c|}
\hline
$\varrho_1-\varrho_{10}$ & $1$ & $2$ & $4$ & $11$ & $17$ & $30$ & $40$ & $52$ & $75$ & $102$ \\\hline
$\varrho_{11}-\varrho_{20}$ & $130$ & $132$ & $166$ & $203$ & $236$ & $282$ & $321$ & $370$ & $394$ & $444$ \\\hline
$\varrho_{21}-\varrho_{30}$ & $506$ & $550$ & $584$ & $645$ & $686$ & $777$ & $841$ & $926$ & $968$ & $1045$ \\\hline
$\varrho_{31}-\varrho_{40}$ & $1108$ & $1199$ & $1248$ & $1346$ & $1390$ & $1523$ & $1569$ & $1694$ & $1800$ & $1858$ \\\hline
$\varrho_{41}-\varrho_{50}$ & $1998$ & $2098$ & $2197$ & $2273$ & $2395$ & $2507$ & $2511$ & $2789$ & $2846$ & $2962$ \\\hline
$\varrho_{51}-\varrho_{60}$ & $3047$ & $3119$ & $3310$ & $3317$ & $3565$ & $3735$ & $3938$ & $4042$ & $4201$ & $4313$ \\\hline
$\varrho_{61}-\varrho_{70}$ & $4442$ & $4546$ & $4779$ & $4906$ & $5014$ & $5108$ & $5382$ & $5508$ & $5737$ & $5869$ \\\hline
$\varrho_{71}-\varrho_{80}$ & $5937$ & $6272$ & $6384$ & $6592$ & $6827$ & $6940$ & $7121$ & $7199$ & $7329$ & $7492$ \\\hline
$\varrho_{81}-\varrho_{90}$ & $7773$ & $8040$ & $8341$ & $8345$ & $8699$ & $8864$ & $9169$ & $9374$ & $9484$ & $9726$ \\\hline
$\varrho_{91}-\varrho_{100}$ & $9923$ & $10\,138$ & $10\,494$ & $10\,628$ & $10\,819$ & $11\,096$ & $11\,374$ & $11\,542$ & $11\,751$ & $12\,020$ \\
\hline
\end{tabular}
\vskip0.5cm
\caption{The first $100$ elements of the sequence \oeis{A399908} of the first elements in rows $\varrho_r$ for $r\geqslant 1$.}
\label{tabA2}
\end{table}

\begin{table}
\centering
\begin{tabular}{|c|c|c|c|c|c|c|c|c|c|c|}
\hline
$\psi_1-\psi_{10}$ & $2$ & $4$ & $8$ & $17$ & $30$ & $44$ & $75$ & $86$ & $116$ & $133$ \\\hline
$\psi_{11}-\psi_{20}$ & $145$ & $204$ & $208$ & $271$ & $294$ & $322$ & $345$ & $364$ & $368$ & $404$ \\\hline
$\psi_{21}-\psi_{30}$ & $426$ & $482$ & $506$ & $557$ & $602$ & $853$ & $939$ & $979$ & $1059$ & $1109$ \\\hline
$\psi_{31}-\psi_{40}$ & $1117$ & $1124$ & $1148$ & $1219$ & $1284$ & $1309$ & $1368$ & $1511$ & $1585$ & $1872$ \\\hline
$\psi_{41}-\psi_{50}$ & $2032$ & $2134$ & $2193$ & $2324$ & $2398$ & $2549$ & $2729$ & $2877$ & $2906$ & $3050$ \\\hline
$\psi_{51}-\psi_{60}$ & $3315$ & $3384$ & $3547$ & $3817$ & $3854$ & $3880$ & $3908$ & $4058$ & $4145$ & $4164$ \\\hline
$\psi_{61}-\psi_{70}$ & $4461$ & $4647$ & $4786$ & $4807$ & $5193$ & $5234$ & $5523$ & $5656$ & $5833$ & $5843$ \\\hline
$\psi_{71}-\psi_{80}$ & $5934$ & $6099$ & $6212$ & $6262$ & $6396$ & $6604$ & $6756$ & $6890$ & $7069$ & $7102$ \\\hline
$\psi_{81}-\psi_{90}$ & $7553$ & $7581$ & $7671$ & $8057$ & $8171$ & $8203$ & $9222$ & $9266$ & $9278$ & $9640$ \\\hline
$\psi_{91}-\psi_{100}$ & $9714$ & $9792$ & $9996$ & $10\,393$ & $10\,583$ & $10\,717$ & $10\,989$ & $12\,081$ & $12\,172$ & $12\,382$ \\
\hline
\end{tabular}
\vskip0.5cm
\caption{The first $100$ elements of the sequence \oeis{A399897} of the first elements in rows $\psi_r$ of the barrycade $B_\psi$ for $r\geqslant 1$.}
\label{tabA3}
\end{table}

\begin{algorithm}[H]
\caption{Construction of a grasshopper $\mathbb{N}$-barrycade $B_\psi$}\label{alpsi}
\begin{algorithmic}[1]
\State $P_0\gets \emptyset$
\For{$r=1,2,3,\ldots$}
    \State $\psi_r\gets ()$
    \State $s\gets 0$
    \State $U\gets \emptyset$
    \For{$k=1,2,3,\ldots$}
        \State Let $K$ be the smallest positive integer such that $K\notin U$
        \State Choose the smallest positive integer $K'$ such that
        \[
            K'\notin U,\qquad K'\neq K,\qquad
            s+K'\notin P_{r-1},\qquad
            s+K'+K\notin P_{r-1}.
        \]
        \State Append $K'$ to $\psi_r$
        \State Append $K$ to $\psi_r$
        \State $U\gets U\cup\{K',K\}$
        \State $s\gets s+K'+K$
    \EndFor
    \State $P_r\gets P_{r-1}\cup S_{\psi_r}$
\EndFor
\end{algorithmic}
\end{algorithm}

\begin{table}
\centering
\begin{tabular}{|c|c|c|c|c|c|c|c|c|c|c|}
\hline
$a_1-a_{10}$ & $1$ & $6$ & $15$ & $19$ & $22$ & $26$ & $33$ & $41$ & $54$ & $59$ \\\hline
$a_{11}-a_{20}$ & $61$ & $65$ & $68$ & $70$ & $73$ & $90$ & $101$ & $103$ & $110$ & $114$ \\\hline
$a_{21}-a_{30}$ & $119$ & $130$ & $135$ & $150$ & $153$ & $163$ & $170$ & $179$ & $185$ & $192$ \\\hline
$a_{31}-a_{40}$ & $198$ & $223$ & $240$ & $242$ & $256$ & $268$ & $274$ & $291$ & $299$ & $310$ \\\hline
$a_{41}-a_{50}$ & $315$ & $318$ & $338$ & $361$ & $378$ & $385$ & $389$ & $393$ & $399$ & $409$ \\\hline
$a_{51}-a_{60}$ & $420$ & $437$ & $452$ & $477$ & $487$ & $491$ & $496$ & $500$ & $517$ & $523$ \\\hline
$a_{61}-a_{70}$ & $549$ & $572$ & $578$ & $589$ & $621$ & $627$ & $665$ & $673$ & $676$ & $689$ \\\hline
$a_{71}-a_{80}$ & $695$ & $698$ & $703$ & $721$ & $736$ & $759$ & $768$ & $777$ & $779$ & $799$ \\\hline
$a_{81}-a_{90}$ & $808$ & $810$ & $814$ & $830$ & $846$ & $848$ & $855$ & $860$ & $893$ & $895$ \\\hline
$a_{91}-a_{100}$ & $900$ & $902$ & $922$ & $924$ & $941$ & $945$ & $966$ & $970$ & $985$ & $988$ \\
\hline
\end{tabular}
\vskip0.5cm
\caption{The first $100$ elements of the sequence \oeis{A399898} of partial sums missing from the barrycade $B_\psi$.}
\label{tabA4}
\end{table}

\begin{algorithm}[H]
\caption{Construction of a precise grasshopper $\mathbb{N}$-barrycade $B_\varphi$}\label{alvarphi}
\begin{algorithmic}[1]
\State $P_0\gets \emptyset$
\For{$r=1,2,3,\ldots$}
    \State $\varphi_r\gets ()$
    \State $U\gets \emptyset$
    \State Let $m$ be the smallest positive integer such that $m\notin P_{r-1}$
    \State Append $m$ to $\varphi_r$
    \State $U\gets U\cup\{m\}$
    \State $s\gets m$
    \For{$k=1,2,3,\ldots$}
        \State Let $K$ be the smallest positive integer such that $K\notin U$
        \State Choose the smallest positive integer $K'$ such that
        \[
            K'\notin U,\qquad K'\neq K,\qquad
            s+K'\notin P_{r-1},\qquad
            s+K'+K\notin P_{r-1}.
        \]
        \State Append $K'$ to $\varphi_r$
        \State Append $K$ to $\varphi_r$
        \State $U\gets U\cup\{K',K\}$
        \State $s\gets s+K'+K$
    \EndFor
    \State $P_r\gets P_{r-1}\cup S_{\varphi_r}$
\EndFor
\end{algorithmic}
\end{algorithm}

\begin{table}
\centering
\begin{tabular}{|c|c|c|c|c|c|c|c|c|c|c|}
\hline
$\varphi_1-\varphi_{10}$ & $1$ & $2$ & $3$ & $5$ & $16$ & $26$ & $40$ & $43$ & $54$ & $63$ \\\hline
$\varphi_{11}-\varphi_{20}$ & $73$ & $78$ & $80$ & $95$ & $100$ & $133$ & $136$ & $143$ & $151$ & $159$ \\\hline
$\varphi_{21}-\varphi_{30}$ & $166$ & $168$ & $171$ & $173$ & $200$ & $204$ & $209$ & $268$ & $275$ & $287$ \\\hline
$\varphi_{31}-\varphi_{40}$ & $295$ & $298$ & $313$ & $323$ & $337$ & $343$ & $349$ & $365$ & $387$ & $390$ \\\hline
$\varphi_{41}-\varphi_{50}$ & $406$ & $408$ & $425$ & $451$ & $510$ & $526$ & $544$ & $554$ & $587$ & $590$ \\\hline
$\varphi_{51}-\varphi_{60}$ & $621$ & $624$ & $627$ & $640$ & $646$ & $655$ & $661$ & $695$ & $698$ & $710$ \\\hline
$\varphi_{61}-\varphi_{70}$ & $755$ & $765$ & $775$ & $797$ & $812$ & $815$ & $828$ & $850$ & $853$ & $855$ \\\hline
$\varphi_{71}-\varphi_{80}$ & $860$ & $872$ & $877$ & $884$ & $889$ & $894$ & $902$ & $920$ & $924$ & $939$ \\\hline
$\varphi_{81}-\varphi_{90}$ & $960$ & $963$ & $970$ & $990$ & $1012$ & $1016$ & $1020$ & $1029$ & $1051$ & $1061$ \\\hline
$\varphi_{91}-\varphi_{100}$ & $1065$ & $1076$ & $1105$ & $1107$ & $1124$ & $1168$ & $1178$ & $1202$ & $1218$ & $1254$ \\
\hline
\end{tabular}
\vskip0.5cm
\caption{The first $100$ elements of the sequence \oeis{A399900} of the first elements in rows $\varphi_r$ of the precise grasshopper barrycade $B_\varphi$ for $r\geqslant 1$.}
\label{tabA5}
\end{table}

\begin{algorithm}[H]
\caption{Construction of a greedy grasshopper $\mathbb{N}$-barrycade $G_\psi$}\label{alGpsi}
\begin{algorithmic}[1]
\State $P_0\gets \emptyset$
\For{$r=1,2,3,\ldots$}
    \State $\psi_r\gets ()$
    \State $s\gets 0$
    \State $U\gets \emptyset$
    \For{$k=1,2,3,\ldots$}
        \State Let $K$ be the smallest positive integer such that $K\notin U$
        \If{$s+K\notin P_{r-1}$}
            \State Append $K$ to $\psi_r$
            \State $U\gets U\cup\{K\}$
            \State $s\gets s+K$
        \Else
            \State Choose the smallest positive integer $K'$ such that
            \[
                K'\notin U,\qquad K'\neq K,\qquad
                s+K'\notin P_{r-1},\qquad
                s+K'+K\notin P_{r-1}.
            \]
            \State Append $K'$ to $\psi_r$
            \State Append $K$ to $\psi_r$
            \State $U\gets U\cup\{K',K\}$
            \State $s\gets s+K'+K$
        \EndIf
    \EndFor
    \State $P_r\gets P_{r-1}\cup S_{\psi_r}$
\EndFor
\end{algorithmic}
\end{algorithm}

\begin{table}
\centering
\begin{tabular}{|c|c|c|c|c|c|c|c|c|c|c|}
\hline
$\psi_1-\psi_{10}$ & $1$ & $4$ & $8$ & $19$ & $26$ & $42$ & $51$ & $72$ & $102$ & $134$ \\\hline
$\psi_{11}-\psi_{20}$ & $143$ & $147$ & $157$ & $168$ & $200$ & $266$ & $291$ & $296$ & $309$ & $314$ \\\hline
$\psi_{21}-\psi_{30}$ & $321$ & $526$ & $545$ & $556$ & $593$ & $699$ & $836$ & $854$ & $900$ & $940$ \\\hline
$\psi_{31}-\psi_{40}$ & $1252$ & $1310$ & $1414$ & $1427$ & $1586$ & $1592$ & $1648$ & $1826$ & $1887$ & $1922$ \\\hline
$\psi_{41}-\psi_{50}$ & $2055$ & $2331$ & $2373$ & $2547$ & $2554$ & $2614$ & $2624$ & $2770$ & $2823$ & $2912$ \\\hline
$\psi_{51}-\psi_{60}$ & $2995$ & $3153$ & $3196$ & $3210$ & $3515$ & $3549$ & $3980$ & $4310$ & $4541$ & $4732$ \\\hline
$\psi_{61}-\psi_{70}$ & $4821$ & $4987$ & $5015$ & $5048$ & $5113$ & $5144$ & $5652$ & $5765$ & $6083$ & $6405$ \\\hline
$\psi_{71}-\psi_{80}$ & $6623$ & $6708$ & $6758$ & $6834$ & $6869$ & $6944$ & $7005$ & $7441$ & $7720$ & $7725$ \\\hline
$\psi_{81}-\psi_{90}$ & $7792$ & $7797$ & $8111$ & $8310$ & $8635$ & $8763$ & $9580$ & $9665$ & $9960$ & $9985$ \\\hline
$\psi_{91}-\psi_{100}$ & $10\,290$ & $10\,437$ & $10\,720$ & $10\,855$ & $10\,957$ & $11\,001$ & $11\,108$ & $11\,157$ & $11\,294$ & $11\,298$ \\
\hline
\end{tabular}
\vskip0.5cm
\caption{The first $100$ elements of the sequence \oeis{A399902} of the first elements in rows $\psi_r$ of the barrycade $G_\psi$ for $r\geqslant 1$.}
\label{tabA6}
\end{table}

\begin{table}
\centering
\begin{tabular}{|c|c|c|c|c|c|c|c|c|c|c|}
\hline
$a_1-a_{10}$ & $2$ & $12$ & $17$ & $35$ & $37$ & $62$ & $77$ & $84$ & $90$ & $93$ \\\hline
$a_{11}-a_{20}$ & $100$ & $113$ & $118$ & $129$ & $151$ & $161$ & $176$ & $183$ & $186$ & $189$ \\\hline
$a_{21}-a_{30}$ & $196$ & $214$ & $227$ & $244$ & $271$ & $334$ & $350$ & $366$ & $373$ & $391$ \\\hline
$a_{31}-a_{40}$ & $399$ & $405$ & $421$ & $428$ & $432$ & $451$ & $460$ & $471$ & $484$ & $494$ \\\hline
$a_{41}-a_{50}$ & $552$ & $566$ & $583$ & $588$ & $610$ & $617$ & $662$ & $682$ & $701$ & $724$ \\\hline
$a_{51}-a_{60}$ & $731$ & $748$ & $760$ & $773$ & $788$ & $800$ & $815$ & $817$ & $846$ & $872$ \\\hline
$a_{61}-a_{70}$ & $896$ & $945$ & $965$ & $977$ & $983$ & $1003$ & $1010$ & $1030$ & $1049$ & $1056$ \\\hline
$a_{71}-a_{80}$ & $1063$ & $1077$ & $1080$ & $1100$ & $1119$ & $1123$ & $1127$ & $1143$ & $1157$ & $1165$ \\\hline
$a_{81}-a_{90}$ & $1169$ & $1174$ & $1207$ & $1219$ & $1221$ & $1224$ & $1237$ & $1243$ & $1254$ & $1269$ \\\hline
$a_{91}-a_{100}$ & $1272$ & $1302$ & $1314$ & $1322$ & $1352$ & $1364$ & $1372$ & $1374$ & $1396$ & $1404$ \\
\hline
\end{tabular}
\vskip0.5cm
\caption{The first $100$ terms of the sequence \oeis{A399903} of partial sums missing from the barrycade $G_\psi$.}
\label{tabA7}
\end{table}

\clearpage
\begin{algorithm}[H]
\caption{Construction of a precise greedy grasshopper $\mathbb{N}$-barrycade $G_\varphi$}\label{alGvarphi}
\begin{algorithmic}[1]
\footnotesize
\State $P_0\gets \emptyset$
\For{$r=1,2,3,\ldots$}
    \State $\varphi_r\gets ()$
    \State $U\gets \emptyset$
    \State Let $m$ be the smallest positive integer such that $m\notin P_{r-1}$
    \State Append $m$ to $\varphi_r$
    \State $U\gets U\cup\{m\}$
    \State $s\gets m$
    \For{$k=1,2,3,\ldots$}
        \State Let $K$ be the smallest positive integer such that $K\notin U$
        \If{$s+K\notin P_{r-1}$}
            \State Append $K$ to $\varphi_r$
            \State $U\gets U\cup\{K\}$
            \State $s\gets s+K$
        \Else
            \State Choose the smallest positive integer $K'$ such that
            \Statex \hspace{\algorithmicindent}$K'\notin U$, $K'\neq K$, $s+K'\notin P_{r-1}$, and
            \Statex \hspace{\algorithmicindent}$s+K'+K\notin P_{r-1}$.
            \State Append $K'$ to $\varphi_r$
            \State Append $K$ to $\varphi_r$
            \State $U\gets U\cup\{K',K\}$
            \State $s\gets s+K'+K$
        \EndIf
    \EndFor
    \State $P_r\gets P_{r-1}\cup S_{\varphi_r}$
\EndFor
\end{algorithmic}
\end{algorithm}

\begin{table}[H]
\centering
\begin{tabular}{|c|c|c|c|c|c|c|c|c|c|c|}
\hline
$a_1-a_{10}$ & $1$ & $2$ & $4$ & $9$ & $13$ & $16$ & $23$ & $27$ & $39$ & $51$ \\\hline
$a_{11}-a_{20}$ & $53$ & $57$ & $61$ & $63$ & $65$ & $74$ & $90$ & $100$ & $102$ & $118$ \\\hline
$a_{21}-a_{30}$ & $142$ & $145$ & $152$ & $159$ & $164$ & $170$ & $178$ & $216$ & $230$ & $242$ \\\hline
$a_{31}-a_{40}$ & $255$ & $274$ & $292$ & $294$ & $299$ & $312$ & $338$ & $342$ & $349$ & $364$ \\\hline
$a_{41}-a_{50}$ & $369$ & $387$ & $394$ & $428$ & $431$ & $451$ & $457$ & $480$ & $483$ & $486$ \\\hline
$a_{51}-a_{60}$ & $489$ & $541$ & $544$ & $547$ & $556$ & $570$ & $574$ & $578$ & $586$ & $591$ \\\hline
$a_{61}-a_{70}$ & $602$ & $625$ & $663$ & $714$ & $721$ & $726$ & $738$ & $756$ & $775$ & $797$ \\\hline
$a_{71}-a_{80}$ & $817$ & $844$ & $855$ & $867$ & $876$ & $883$ & $889$ & $892$ & $900$ & $920$ \\\hline
$a_{81}-a_{90}$ & $933$ & $942$ & $945$ & $966$ & $975$ & $985$ & $987$ & $1025$ & $1028$ & $1059$ \\\hline
$a_{91}-a_{100}$ & $1076$ & $1080$ & $1120$ & $1153$ & $1156$ & $1159$ & $1170$ & $1219$ & $1244$ & $1249$ \\
\hline
\end{tabular}
\vskip0.5cm
\caption{The first $100$ terms of the sequence \oeis{A399905} of the first elements in rows $\varphi_r$ of the barrycade $G_\varphi$.}
\label{tabA8}
\end{table}

Prefixes of length 100 of the word representations of infinite barrycades:
\begin{align*}
B_\varrho\ (\oeis{A399909}):&\,\mathtt{12132133214432145432154553214654632165475632174656}\\
&\,\mathtt{68321475677573217468858793217648598967321899458769}\\
B_\psi\ (\oeis{A399899}):&\,\mathtt{01122013312323014402102340431552034124350546614253}\\
&\,\mathtt{65601234060564051020350477512367476887507162345678}\\
B_\varphi\ (\oeis{A399901}):&\,\mathtt{12314122331442152343415526312435456615275384123465}\\
&\,\mathtt{677951623647(10)751234657(11)6788(12)1(13)2634576997}\\
&\,\mathtt{1234(14)5768(15)}\\
G_\psi\ (\oeis{A399904}):&\,\mathtt{10122123313023120344143245512453540103245665124636}\\
&\,\mathtt{77651342567076412735788475013627504687801203574860}\\
G_\varphi\ (\oeis{A399906}):&\,\mathtt{12133122412353163244127434814325545125934566134256}\\
&\,\mathtt{(10)6(11)512(12)364(13)6(14)5(15)13425677(16)688126347}\\
&\,\mathtt{878568(17)134257867(18)}
\end{align*}

\end{document}